\newtheorem{lemma}{Lemma}
\newtheorem{theorem}{Theorem}
\newtheorem{claim}{Claim}
\newtheorem{conjecture}{Conjecture}
\newtheorem{corollary}{Corollary}
\newenvironment{proof}
   {\begin{trivlist}\item[]\textbf{\bf{Proof. }}\ignorespaces}
   {\qed\end{trivlist}}
\newcommand{\qed}{{\ifmmode q.e.d. \else\unskip\nobreak\hfil
\penalty50\quad\null\nobreak\hfill $\square$ \parfillskip=0pt
\finalhyphendemerits=0\par\fi}} 
\title{Vertex coloring of plane graphs with nonrepetitive boundary paths}
\author{J\'anos Bar\'at\thanks{Research is supported by OTKA Grants PD~75837 and K~76099, and the J\'anos Bolyai Research Scholarship of the
Hungarian Academy of Sciences.}\\
\small Department of Computer Science and Systems Technology\\[-0.8ex]
\small University of Pannonia, Egyetem u.\ 10, 8200 Veszpr\'em, Hungary\\
\small \texttt{barat@dcs.vein.hu}\\
\and
J\'ulius Czap\\
\small Department of Applied Mathematics and Business Informatics\\ [-0.8ex]
\small Faculty of Economics, Technical University of Ko\v{s}ice\\ [-0.8ex]
\small B. N\v{e}mcovej 32, SK-040 01 Ko\v{s}ice, Slovakia\\
\small \texttt{julius.czap@tuke.sk}
}
\date{}
\begin{document}

\maketitle

\begin{abstract}
A sequence $s_1,s_2,\dots,s_k,s_1,s_2,\dots,s_k$ is a repetition. 
A sequence $S$ is nonrepetitive, if no subsequence of consecutive terms of $S$ form a repetition. 
Let $G$ be a vertex colored graph. 
A path of $G$ is  nonrepetitive, if the sequence of colors on its vertices is nonrepetitive. 
If $G$ is a plane graph, then a facial nonrepetitive vertex coloring of $G$ is a vertex coloring such that any facial path is nonrepetitive. 
Let $\pi_f(G)$ denote the minimum number of colors of a facial nonrepetitive vertex coloring of $G$. 
Jendro\v l and Harant posed a conjecture that $\pi_f(G)$ can be bounded from above by a constant.
We prove that $\pi_f(G)\le 24$ for any plane graph $G$. 
\end{abstract}

MSC: 05C15
\section{Introduction}

A sequence $s_1,s_2,\dots,s_k,s_1,s_2,\dots,s_k$ is a {\it repetition}. 
A sequence $S$ is {\it nonrepetitive}, if no subsequence of consecutive terms of $S$ form a repetition. 
A vertex coloring of $G$ is {\it nonrepetitive}, if there is no path $v_1,v_2,\dots,v_{2t}$ such that $v_i$ and $v_{t+i}$ receive the same color 
for all $i=1,2,\dots,t$. 
The {\it Thue chromatic number} of a graph $G$ is the minimum number of colors needed in a nonrepetitive coloring.
It is denoted by $\pi(G)$.
The seminal result in this field is by Thue \cite{T}, who proved that the $n$-vertex path $P_n$ satisfies

\begin{theorem}
$\pi(P_1)=1$, $\pi(P_2)=\pi(P_3)=2$, and $\pi(P_n)=3$ for every $n\ge4$.
\end{theorem}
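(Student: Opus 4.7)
The theorem splits into small cases, a lower bound, and the main upper bound. For $P_1$ a single color works; for $P_2$ we need at least two colors since any pair $xx$ is itself a repetition, and the coloring $ab$ suffices; for $P_3$ the sequence $aba$ is easily checked to be square-free, giving $\pi(P_3)=2$. The lower bound $\pi(P_n)\ge 3$ for $n\ge 4$ reduces to $P_4$: any two-coloring avoiding an immediate repeat $xx$ must alternate, and the only two length-$4$ alternating words on two letters are $abab$ and $baba$, both of which are themselves repetitions of period $2$. So two colors never suffice once $n\ge 4$.

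The substantive part is the upper bound $\pi(P_n)\le 3$ for every $n$, equivalently the existence of an infinite square-free word on three symbols. My plan is to follow Thue and exhibit such a word via a morphism $\phi\colon\{a,b,c\}^*\to\{a,b,c\}^*$ with the \emph{square-free preservation property}: whenever a word $w$ is square-free, so is its image $\phi(w)$. Starting from a single letter and iterating $\phi$ produces a nested chain of square-free words of unbounded length; their limit is an infinite square-free sequence, and restricting its first $n$ terms to the vertices of $P_n$ gives the desired three-coloring.

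The main obstacle is verifying the square-free preservation property for the chosen $\phi$. Taking a minimal counterexample, one assumes $\phi(w)$ contains a square $uu$ while $w$ itself is square-free, and must argue that $u$ is essentially a $\phi$-image: the block boundaries of $\phi$ have to synchronize with the midpoint of $uu$ up to a short shift, leaving only a finite number of boundary alignments to inspect. Each such alignment either produces a shorter square inside $w$, contradicting square-freeness, or is ruled out by the explicit combinatorial structure of the images $\phi(a), \phi(b), \phi(c)$. This bounded case analysis is the combinatorial core of Thue's original construction and is the only genuinely nontrivial step; once it is established, induction on the number of iterates and passage to the limit complete the proof.
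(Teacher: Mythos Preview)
Your outline is a correct sketch of the classical argument: the small cases and the lower bound for $n\ge 4$ are handled exactly as you say, and the upper bound via a square-free preserving morphism iterated from a single letter is the standard route to an infinite square-free word over three symbols. There is no genuine gap, only the acknowledged fact that the ``bounded case analysis'' verifying the preservation property is left as a pointer rather than carried out in full.

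As for comparison with the paper: the paper does not actually prove this theorem. It is stated as Thue's seminal result with a citation to~\cite{T} and no argument given. So there is nothing in the paper to compare your approach against for this particular statement. It is perhaps worth noting that later in the paper (Theorem~\ref{beta}) the authors do invoke a different explicit Thue construction, namely the recursively defined palindromic sequences $\beta_i$ built by alternating a block with its complement modulo~$3$; that construction would equally well supply the upper bound $\pi(P_n)\le 3$ and avoids the morphism machinery, but it too is only cited rather than proved.
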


A major open problem in this area is due to Grytczuk \cite{G}.

\begin{conjecture}\label{conjG}
There is an absolute constant $K$ such that any planar graph $G$ satisfies $\pi(G)\le K$.
\end{conjecture}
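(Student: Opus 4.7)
The plan is to combine a structural decomposition of planar graphs with a product-of-palettes coloring scheme. First, I would fix a BFS layering $V_0, V_1, V_2, \ldots$ of $G$ rooted at an arbitrary vertex, so that every edge of $G$ lies either within one layer or between consecutive layers. Second, I would color each layer $V_i$ using a palette of constant size so that every ``horizontal'' subpath (one contained in a single layer) appearing inside any path of $G$ is nonrepetitive. The model result here is that every tree admits a nonrepetitive vertex coloring with a constant number of colors (around four), and each BFS layer together with the relevant edges within it can be organised into a union of tree-like pieces one can exploit. Third, I would take the product of this layer coloring with a coloring of the layer index (taken modulo some large constant) that is itself a nonrepetitive coloring of the integer sequence $0,1,2,\ldots$; three symbols suffice by Theorem~1. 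The resulting palette then has constant size, which is what the conjecture demands.

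The main obstacle---and the reason this conjecture has resisted proof for years---is that an arbitrary path in $G$ can weave between many layers in complicated ways: a horizontal step in $V_i$ can be followed by a vertical step into $V_{i+1}$, another horizontal step, a jump back to $V_i$, and so on. Even if horizontal subpaths are individually nonrepetitive and the vertical index sequence is nonrepetitive, their interleaving can still produce a repetition in the combined color sequence. Taming these crossing interactions is precisely the heart of the conjecture, and seems to require a finer product-structure theorem for planar graphs rather than a naive BFS layering. Weaker versions of the conjecture (planar graphs of bounded maximum degree, outerplanar graphs, or the facial variant studied in this paper) can be attacked with moderate effort via such techniques; the full conjecture, however, appears to need a genuinely new structural insight into planar graphs that couples the horizontal and vertical decompositions tightly enough to rule out mixed repetitions.
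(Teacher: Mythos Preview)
This statement is \emph{Conjecture~\ref{conjG}} in the paper, not a theorem: the paper does not prove it, and indeed explicitly cites it as a major open problem. There is therefore no ``paper's own proof'' to compare your proposal against. What the paper actually proves is the much weaker \emph{facial} variant (Theorem~\ref{24}, $\pi_f(G)\le 24$), where only facial paths need to be nonrepetitive; the full Thue chromatic number $\pi(G)$ for planar $G$ remains unbounded by any result in this paper.

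Your proposal is honest about this: you outline a BFS-layering-plus-product-coloring scheme and then correctly identify why it fails---paths can weave between layers so that nonrepetitivity of the horizontal pieces and of the layer-index sequence separately does not prevent a repetition in the interleaved color sequence. But that means what you have written is not a proof at all; it is a discussion of a natural approach together with an explanation of the obstruction. If the assignment was to prove the statement, you have not done so (and, as you yourself note, nobody has with these tools alone). If the intent was to situate the conjecture and explain why the paper retreats to the facial version, then your diagnosis is accurate: controlling mixed horizontal/vertical repetitions is exactly the missing ingredient, and the facial restriction sidesteps it because a facial path meets at most two consecutive layers, which is precisely what makes the paper's argument (layering plus outerplanar nonrepetitive colorings on each $[\partial(G_i)]^+$) go through.
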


Reportedly, several group of authors have achieved wrong-proven results with the constant $K$ between 2000 and 3000. 
Bar\'at and Varj\'u \cite{BV} showed that the general constant is at least 10. 
Motivated by Conjecture \ref{conjG}, Bre\v sar, Grytczuk, Klav\v zar, Niwczyk and Peterin \cite{BGKNP} proved

\begin{theorem}
If $T$ is a tree, then $\pi(T)\le 4$, and the bound is tight.
\end{theorem}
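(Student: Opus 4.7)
My plan is to establish the upper bound $\pi(T)\le 4$ by exploiting the tree's hierarchical structure, and to prove tightness with a concrete small example. I would begin by rooting $T$ at an arbitrary vertex $r$ and recording the depth $d(v)$ of each vertex. The decisive structural observation is that every path $P$ in $T$ admits a unique lowest common ancestor $w$ of its endpoints, so $P$ decomposes as an ascending segment from one endpoint up to $w$, followed by a descending segment from $w$ down to the other endpoint; along $P$ the depth sequence is strictly unimodal, decreasing to $d(w)$ and then increasing.

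My strategy is to build a four-coloring recursively, processing vertices in order of increasing depth. At each step, a new vertex $v$ would receive a color chosen to avoid creating a repetition ending at $v$ on the path from $r$ to $v$ while also respecting a constraint that encodes branch or depth-parity information. Thue's theorem already guarantees that three colors suffice to keep every root-to-vertex path nonrepetitive; the fourth color is introduced precisely to rule out repetitions that straddle an LCA. To verify correctness I would argue by contradiction: suppose a repetition $x_1\ldots x_tx_1\ldots x_t$ appears on a path $P$, and locate the position of $w$ in $P$. If the repetition lies entirely within one monotone half of $P$, Thue's theorem on the corresponding root path yields the contradiction. If it straddles $w$, then equal colors on matched positions force the matched vertices to sit symmetrically about $w$ at equal depths on opposite branches, and the designed extra constraint of the fourth color should block color equality across such symmetric pairs.

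For tightness, I would exhibit a concrete tree requiring four colors, a natural candidate being a sufficiently deep complete binary tree or a subdivided star $K_{1,3}$ with long legs. The nonexistence of a valid 3-coloring would follow from a finite case analysis around the central vertex, combined with Thue's theorem constraining the coloring on the legs; every 3-coloring would then be forced to create a repetitive path through the center. The main obstacle I anticipate is in the upper bound, namely engineering the recursive 4-coloring so that repetitions cannot straddle any LCA: matched vertices in a putative repetition may lie in entirely different subtrees, and the added color must carry just enough information about branch identity or parity to break all such couplings simultaneously without inflating the palette beyond four.
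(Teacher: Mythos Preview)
The paper does not actually prove this theorem; it is quoted from Bre\v{s}ar, Grytczuk, Klav\v{z}ar, Niwczyk and Peterin \cite{BGKNP} as background, with no argument given here. So there is no in-paper proof to compare against, only the original source.

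That said, your proposal has a genuine gap at the decisive step. You correctly set up the structural picture: a path in a rooted tree has a unimodal depth profile with a unique turning point at the LCA, and coloring by depth with a Thue sequence on three symbols makes every monotone root-to-vertex segment nonrepetitive. The trouble is your handling of repetitions that straddle the LCA. You propose to ``introduce a fourth color'' to block these, but you never specify the rule governing that color, and you yourself flag this as the main obstacle. This is not a detail that fills itself in: a square straddling the LCA matches vertices sitting at equal depths on two different branches, and under a pure depth-coloring such vertices already receive the \emph{same} color. Any added mechanism must therefore distinguish branches, yet a vertex may have unboundedly many children, so one extra color cannot encode branch identity. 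Without a concrete rule and a verification that it kills every straddling square, the upper-bound argument is incomplete.

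The proof in \cite{BGKNP} avoids this difficulty by a different device. Rather than three colors plus a separate ``marker'', they construct a single infinite sequence $s_1s_2s_3\ldots$ over a four-letter alphabet with the stronger property that every valley word $s_a s_{a-1}\cdots s_{m+1} s_m s_{m+1}\cdots s_b$ is square-free. Coloring each vertex $v$ by $s_{d(v)}$ then makes every path nonrepetitive at once, since the color sequence along any path is exactly such a valley word. The existence of such a four-letter sequence is the real combinatorial content; three letters provably do not suffice for this stronger property, which also underpins tightness. Your tightness idea via a subdivided $K_{1,3}$ is in the right direction; \cite{BGKNP} exhibits an explicit small tree with $\pi=4$.
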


The next step towards planar graphs includes the outerplanar graphs. 
Independently, Bar\'at, Varj\'u \cite{BV} and K\"undgen, Pelsmajer \cite{KP} proved a reasonably good bound.

\begin{theorem}\label{BV}
If $G$ is an outerplanar graph, then $\pi(G)\leq 12$.
\end{theorem}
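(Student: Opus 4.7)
My plan would be to prove the bound via a BFS-layer decomposition combined with Thue's 3-coloring of paths recalled in the first theorem of the paper. Since $\pi$ is monotone under edge addition, I would first reduce to the case of a maximal outerplanar graph, i.e.\ a triangulated polygon. Fixing any vertex $v_0$ on the outer cycle, I would run BFS from $v_0$ and obtain layers $L_0=\{v_0\}, L_1, L_2, \dots$.

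The key structural input I would establish is that in an outerplanar graph each $L_i$ induces a disjoint union of paths and that the bipartite graph between consecutive layers $L_i, L_{i+1}$ has an ``interval'' form: the neighbors in $L_{i+1}$ of any vertex of $L_i$ appear as a contiguous block on a path of $L_{i+1}$. Both properties are forced by the fact that every vertex lies on the outer face together with planarity, and together they let one speak of a canonical linear order on each layer, inherited from the outer boundary and the BFS tree.

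The coloring would be a product $c(v) = (\alpha(v), \beta(v))$ on $3 \times 4 = 12$ colors. The factor $\alpha \in \{1,2,3\}$ would be the Thue nonrepetitive 3-coloring of the long sequence obtained by concatenating the path components of all layers in BFS order. The factor $\beta$ would use $4$ colors recording a pair $(i \bmod 2,\sigma(v))$, where $i$ is the layer of $v$ and $\sigma(v)\in\{0,1\}$ is a bit distinguishing how $v$ attaches to its parent in $L_{i-1}$ (say, whether $v$ is a ``leftmost/rightmost'' or an ``interior'' child in the interval structure above). The design intent is that $\beta$ constrains any would-be repetition to align layer-wise and sibling-wise, while $\alpha$ then forbids any repetition that survives inside a single layer.

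The heart of the argument---and the main obstacle---is verifying that no path $u_1 u_2 \dots u_{2t}$ can satisfy $c(u_j) = c(u_{t+j})$ for every $j$. I would study the sequence of layer indices $\ell(u_j)$, which changes by $\pm 1$ along inter-layer edges and stays constant along intra-layer ones, and use the $\beta$ component to force $\ell(u_j)$ and $\ell(u_{t+j})$ to share the same parity pattern. A case analysis leveraging the interval/planar structure of inter-layer edges should then upgrade this to the stronger claim that the two halves visit matching layers in matching orders, thereby reducing any hypothetical repetition to one inside the concatenated layer path---contradicting the Thue nonrepetitiveness of $\alpha$. The delicate point is calibrating $\beta$ so that four colors really do exclude every possible ``zig-zag'' matching between the two halves of the path without having to encode the precise sibling index, which would blow up the number of colors beyond 12.
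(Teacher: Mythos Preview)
The paper does not prove Theorem~\ref{BV}; it is quoted from \cite{BV} and \cite{KP} and used as a black box inside the proof of Theorem~\ref{24}. There is no in-paper argument to compare your attempt against.

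On its own merits, your proposal is a plan whose decisive step is left open, as you acknowledge. Even granting the structural claims about BFS layers in a maximal outerplanar graph, the coloring you describe does not yet close the argument. Matching layer parities for $u_j$ and $u_{t+j}$ does not force them into the \emph{same} layer, only into layers of the same parity; and even when the two halves do trace the same layer pattern, the vertices $u_1,\dots,u_{2t}$ need not form a contiguous factor of the concatenated layer word that carries $\alpha$, so Thue nonrepetitiveness of $\alpha$ gives no immediate contradiction. What you would actually need is that every excursion of the path away from a layer $L_i$ and back re-enters $L_i$ at the position adjacent to where it left, so that the $L_i$-trace is an honest subpath of $L_i$. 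A single ``extremal versus interior'' bit in $\sigma$ does not enforce this: distinct vertices of $L_i$ with the same $(\,i\bmod 2,\sigma)$ can sit far apart, and nothing in your $\beta$ rules out the second half re-entering $L_i$ at a different (but same-$\sigma$) spot than the first half did. Until that obstacle is handled, the scheme does not deliver the bound~$12$.
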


Considering a conjecture for planar graphs, one can usually get a good insight by looking at 
the $k\times k$ grid.
In this case, the $k\times k$ grid is known to have bounded Thue chromatic number \cite{BV}.
The {\em tree-width} of a graph $G$ can be defined to be the minimum integer $k$ such that $G$
is a subgraph of a chordal graph with no clique on $k+2$ vertices.
One prominent feature of the grid is that a large grid has large tree-width.
Therefore, it was a natural complementary idea to consider the class of graphs with bounded tree-width.
Independently, Bar\'at, Varj\'u \cite{BV} and K\"undgen, Pelsmajer \cite{KP} proved an upper bound
exponential in the tree-width, but independent of the number of vertices.

\begin{theorem}
If $G$ is a graph of tree-width $t$, then $\pi(G)\le 4^t$.
\end{theorem}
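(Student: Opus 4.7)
The plan is to establish $\pi(G)\le 4^{t}$ by a product construction, combining $t$ nonrepetitive colorings of auxiliary trees, each supplied by the preceding theorem that every tree admits a nonrepetitive $4$-coloring.

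First, fix a tree decomposition $(T,\{X_x\}_{x\in V(T)})$ of $G$ of width $t$, so that $|X_x|\le t+1$ for every bag, and root $T$ at an arbitrary node. For each $v\in V(G)$ the bags containing $v$ form a subtree $T_v$ of $T$; let $t(v)$ denote its root, i.e.\ the node of $T_v$ closest to the root of $T$. Every other vertex $u\in X_{t(v)}\setminus\{v\}$ then has $t(u)$ a strict ancestor of $t(v)$ in $T$.

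Next, process the vertices in order of non-decreasing depth of $t(\cdot)$ and greedily assign to each $v$ a \emph{slot} $\sigma(v)\in\{1,\dots,t+1\}$ distinct from the slots of the (at most $t$) other vertices already present in $X_{t(v)}$; the slot $\sigma(v)$ is then used by $v$ in every bag containing it. For each slot $i$ define an auxiliary forest $F_i$ on $V(G)$ by declaring the parent of $v$ in $F_i$ to be the vertex (if any) occupying slot $i$ in $X_{t(v)}$; parents have strictly shallower top-bags, so $F_i$ really is a forest. By the preceding theorem on trees, pick a nonrepetitive coloring $c_i\colon V(F_i)\to\{0,1,2,3\}$ for each $i$, and set $c(v)=(c_1(v),\dots,c_t(v))$. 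A careful bookkeeping shows only $t$ slots ever contribute at any given vertex (the slot $\sigma(v)$ of $v$ itself adds no $F$-edge at $v$), so the palette has size at most $4^{t}$.

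The nontrivial point is showing that $c$ is nonrepetitive on $G$. Suppose for contradiction $v_1,\dots,v_{2r}$ is a path in $G$ with $c(v_j)=c(v_{r+j})$ for every $j$. Every edge $v_jv_{j+1}$ is captured by exactly one forest $F_i$: if $t(v_j)$ is an ancestor of $t(v_{j+1})$, then $v_j$ is the slot-$\sigma(v_j)$ occupant of $X_{t(v_{j+1})}$ and hence the parent of $v_{j+1}$ in $F_{\sigma(v_j)}$. Since all coordinates of $c$ agree on matched pairs, one aims to extract, for some fixed $i$, a repetition inside $F_i$, contradicting the nonrepetitivity of $c_i$.

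\textbf{Main obstacle.} This last extraction is the crux, because a path in $G$ generally does not restrict to a path --- or even to a walk on distinct vertices --- in any single $F_i$: consecutive edges may be captured by different auxiliary forests. The resolution is a pigeonhole/parity argument on the sequence of capturing slots along the path, exploiting the fact that a repetition of the full vector $c$ forces the slot sequence itself to have repeated structure, which one then lifts to an $F_i$-path that carries a repetition of $c_i$. This step is where the bulk of the technical work lives; once completed, the exponent $t$ in $4^{t}$ reflects exactly the number of ancestor slots needed to index every $G$-edge, while the base $4$ is inherited from the tree theorem cited above.
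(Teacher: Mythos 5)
This statement is not proved in the paper at all: it is quoted as a known result of Bar\'at--Varj\'u \cite{BV} and K\"undgen--Pelsmajer \cite{KP}, so there is no in-paper proof to compare against. Judged on its own, your proposal has a genuine gap, and it sits exactly where you flag it. The ``extraction'' step is not residual technical work; it is the entire theorem, and the product-of-forests framework you set up cannot deliver it. A repetition of the vector coloring $c$ on a path $v_1,\dots,v_{2r}$ in $G$ gives $c_i(v_j)=c_i(v_{r+j})$ for every slot $i$, but the edges of the path captured by a fixed forest $F_i$ form a disjoint union of subpaths of $F_i$, not a single path, and the subpaths occurring in the first half need not align with those in the second half. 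Already for $r=2$: if $v_1v_2$ and $v_3v_4$ lie in $F_1$ while $v_2v_3$ lies in $F_2$, then $F_1$ sees two disjoint edges and $F_2$ sees one edge, so neither $c_1$ nor $c_2$ is confronted with a repetitively colored path, yet $c(v_1)=c(v_3)$, $c(v_2)=c(v_4)$ is perfectly possible. Your proposed rescue --- that a repetition of $c$ ``forces the slot sequence itself to have repeated structure'' --- is unjustified: the capturing slot of an edge is determined by $\sigma$ and the ancestor relation among top-bags, neither of which is encoded in the colors, so equal colors impose no constraint on the slot sequence. This is the standard failure mode of product constructions for nonrepetitive colorings (edge-disjoint unions of nonrepetitively colored graphs are not nonrepetitively colored by the product), and it is precisely why the published proofs take a different route. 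There is also a smaller bookkeeping error: with $t+1$ slots you obtain $t+1$ forests $F_1,\dots,F_{t+1}$, all of which may carry edges, so your construction needs $t+1$ coordinates and yields $4^{t+1}$, not $4^{t}$; the observation that $F_{\sigma(v)}$ contributes no parent \emph{at} $v$ does not let you drop a coordinate globally.

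For contrast, the actual argument in \cite{BV,KP} is an induction on the tree-width using a \emph{layering}, not an edge-partition into forests. One extracts from the tree decomposition a partition $V_1,V_2,\dots$ of $V(G)$ (a breadth-first layering of the bags) such that every edge joins consecutive or equal layers, each layer induces a subgraph of tree-width at most $t-1$, and the layering is \emph{shadow-complete}: the neighborhood in $V_i$ of any component of the graph induced on the deeper layers is a clique. The key lemma is that if such a layering exists and every layer is nonrepetitively $c$-colorable, then $\pi(G)\le 4c$; the shadow-completeness is what allows a path wandering between layers to be projected onto a single layer without losing the repetition, which is exactly the control your forests lack. Induction then gives $4^t$ with base $4$ supplied by the tree theorem, as in your intended exponent. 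If you want to salvage your write-up, you would need to replace the slot forests by such a layering and prove the projection lemma; as it stands, the proposal does not constitute a proof.
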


A {\it facial path} consists of consecutive vertices on the boundary of a face.
Considering the nonrepetitive property, we may restrict our attention for facial paths only.
Jendro\v l and Harant \cite{JH} introduced precisely this notion. 
If $G$ is a connected plane graph, a {\it facial nonrepetitive vertex coloring} of $G$ is a vertex coloring such that any facial path 
is nonrepetitive. 
Here a {\it plane graph} is a graph together with a fixed embedding in the plane.
The {\it facial Thue chromatic number} of $G$, denoted by $\pi_f(G)$, is the minimum number of colors of all facial nonrepetitive vertex colorings of $G$. 
Jendro\v l and Harant \cite{JH} proved the following

\begin{theorem}
If $G$ is a $2$-connected plane graph of maximum degree $\Delta$, $\Delta \geq 3$, then
\begin{itemize}
\item $\pi_f(G) \leq 120 \ln\Delta$,
\item $\pi_f(G) \leq \min\{29(\Delta-2),39\sqrt{\Delta-2},47\sqrt[3]{\Delta-2}\}$,
\item $\pi_f(G) \leq 29$ if $\Delta=3$, $\pi_f(G) \leq 44$ if $\Delta=4$, $\pi_f(G) \leq 59$ if $\Delta=5$, $\pi_f(G) \leq 67$ if $\Delta=6$, $\pi_f(G) \leq 73$ if $\Delta=7$,
\item $\pi_f(G) \leq 16$ if $G$ is Hamiltonian.
\end{itemize}
\end{theorem}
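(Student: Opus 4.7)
The statement combines four different upper bounds, so my plan is to treat them as three groups sharing common tools. Throughout we use that in a $2$-connected plane graph each face is bounded by a simple cycle, so every facial path is an ordered subpath of one such boundary cycle. For the probabilistic bound $\pi_f(G)\le 120\ln\Delta$, I would use the asymmetric Lovász Local Lemma on a uniform random $k$-coloring. Introduce a bad event $B_P$ for each facial path $P$ of even length $2t$ asserting that $P$ is a repetition; then $\Pr[B_P]=k^{-t}$, and $B_P,B_Q$ are independent whenever $V(P)\cap V(Q)=\emptyset$. The essential combinatorial input is that each vertex lies on at most $\Delta$ face boundaries and, on each face, only $2s$ facial paths of length $2s$ pass through a given vertex, so the dependency between two bad events can be counted tightly in terms of $\Delta$ and the two lengths. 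Choosing weights $x_P=\alpha^{t}$ and optimizing $\alpha$ in the LLL condition $k^{-t}\le x_P\prod_{Q\sim P}(1-x_Q)$ should then force $k=O(\ln\Delta)$, with the concrete constant $120$ following from careful bookkeeping or a Moser-style entropy-compression refinement; a naive union bound on dependencies would only give a linear bound in $\Delta$.

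For the polynomial bounds $\pi_f(G)\le\min\{29(\Delta-2),39\sqrt{\Delta-2},47\sqrt[3]{\Delta-2}\}$, the plan is to handle short and long repetitions separately. Fix a cutoff $L\in\{1,2,3\}$ (matching the exponents $1,1/2,1/3$), construct a coloring that rules out all repetitions of length at most $2L$ face by face, and then rule out the longer repetitions using an auxiliary palette obtained by observing that a long facial path uses $O(\Delta)$ distinct edges at each vertex. Optimizing the trade-off between the two palette sizes gives the three bounds, and the explicit small-$\Delta$ constants $29,44,59,67,73$ are obtained by rerunning the same argument for $\Delta\in\{3,\dots,7\}$ with a more careful case analysis; indeed, plugging $\Delta=4,5,6,7$ directly into the polynomial bounds yields worse numbers than those stated, so a tightened tailored argument is needed in each case.

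The Hamiltonian bound $\pi_f(G)\le 16$ is purely structural. A Hamiltonian cycle $C$ decomposes $G$ into two outerplanar subgraphs $G_{\mathrm{in}}$ and $G_{\mathrm{out}}$, consisting of $C$ together with the chords inside and outside of $C$ respectively, and every facial path of $G$ is a facial path of exactly one of them. Theorem~\ref{BV} gives each a nonrepetitive $12$-coloring, but the naive product $12\times 12=144$ is far too many. Instead I would aim to use a common nonrepetitive coloring of $C$ (which, as a cycle, admits a nonrepetitive vertex $4$-coloring) and augment it independently on each side by a second $4$-color coordinate adapted to the inner/outer chord structure, yielding a $4\times 4=16$ palette. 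The main obstacles I expect are: (i) making the dependency counting in the LLL step sharp enough to reach the logarithmic constant $120$, since a naive argument only gives a linear bound in $\Delta$; and (ii) constructing the second $4$-color coordinate in the Hamiltonian case, because a facial-nonrepetitive $4$-coloring of an outerplanar graph with a Hamiltonian outer face is itself nontrivial and forces one to use the chord structure on each side carefully, compatibly with the shared coloring on $C$.
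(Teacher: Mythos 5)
The first thing to say is that the paper contains no proof of this statement: it is quoted from Jendro\v{l} and Harant's preprint \cite{JH} purely as background motivating Conjecture~\ref{conjJH}, and the authors never prove or even sketch it. So there is no argument in the paper to compare yours against, and I can only assess your proposal on its own terms.

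On its own terms, the proposal is a plan with genuine gaps in every item. The most serious is the $120\ln\Delta$ part. With a uniform random $k$-colouring and one bad event per facial path, an event of half-length $s$ has dependency degree of order $st\Delta$ against events of half-length $t$ (each of the $2s$ vertices lies on up to $\Delta$ faces, and on each face at most $2t$ such paths pass through it). Plugging $x_P=\alpha^t$ into the Local Lemma condition and taking $s$-th roots gives $k\ge \alpha^{-1}\exp\bigl(4\Delta\alpha/(1-\alpha)^2\bigr)$, and no choice of $\alpha$ beats $k=\Omega(\Delta/\log\Delta)$: the factor $\alpha^{-1}$ forces $\alpha$ to be large while the exponential forces $\alpha\lesssim(\ln k)/\Delta$. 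You acknowledge that the naive count gives a linear bound, but ``careful bookkeeping or entropy compression'' over the same event structure cannot produce the exponential improvement to $O(\ln\Delta)$; a genuinely different idea (e.g.\ grouping or colouring the faces themselves) is needed and is not identified. The polynomial bounds are described only as ``handle short and long repetitions separately and optimize,'' with no construction for either half, so nothing is established there. For the Hamiltonian bound, the decomposition along the Hamiltonian cycle $C$ into two outerplanar graphs is a sensible start, but your target of a $4\times4$ product rests on the unproven claim that an outerplanar graph admits a facial nonrepetitive $4$-colouring extending a prescribed nonrepetitive $4$-colouring of its outer cycle; the only tool available here, Theorem~\ref{BV}, yields $12\times12=144$ colours, not $16$. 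In short, none of the four bounds is actually proved by the proposal.
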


Based on their experience, Jendro\v l and Harant posed the following

\begin{conjecture}\label{conjJH}
If $G$ is a plane graph, then $\pi_f(G)\le K$, for some constant~$K$.
\end{conjecture}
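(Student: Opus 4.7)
The plan is to bootstrap Theorem~\ref{BV} twice on disjoint palettes, giving the target bound $24 = 12 + 12$.

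First I would reduce to the case where $G$ is $2$-connected, so that every face is bounded by a simple cycle. This is a standard augmentation: one adds chords or pendant edges inside existing faces of $G$ to form a plane supergraph $G^+$ in which every facial path of $G$ is contained in some facial path of $G^+$, and any facial nonrepetitive coloring of $G^+$ restricts to one of $G$. Next, fix a root $r \in V(G)$ and perform a BFS producing the distance layers $V_0, V_1, \ldots, V_k$. Split $V(G)$ by parity of layer: $A = \bigcup_{i \text{ even}} V_i$ and $B = \bigcup_{i \text{ odd}} V_i$. Because consecutive vertices along any face boundary differ in BFS-distance by at most one, each maximal $A$-run along a face boundary lies inside a single even layer, and likewise for $B$.

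Next I would build auxiliary plane graphs $G_A$ on $A$ and $G_B$ on $B$: the edges of $G_A$ come from (i) original $G$-edges with both endpoints in $A$, and (ii) ``shortcut'' edges obtained by contracting each maximal $B$-run on a face boundary to a single edge joining its two $A$-endpoints; $G_B$ is built symmetrically. The structural heart of the argument, which I would need to verify, is that $G_A$ and $G_B$ (after deleting loops and parallel edges) inherit outerplanar embeddings from the embedding of $G$: the nested topological disks bounded by the BFS layers should allow one to place all of $A$ on a single face of $G_A$. Granted this, Theorem~\ref{BV} colors $G_A$ with $12$ colors from a palette $C_A$ and $G_B$ with $12$ colors from a disjoint palette $C_B$. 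For any facial path $P$ of $G$, the $A$-subsequence of $P$ traces a path in $G_A$ and is therefore nonrepetitively colored, and similarly for $B$; since $C_A \cap C_B = \emptyset$, a putative repetition $s_1\cdots s_t s_1 \cdots s_t$ along $P$ must respect the $A/B$ pattern, and a short parity argument forces it to descend into a repetition of one of the two subsequences, contradicting their nonrepetitive coloring.

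The main obstacle I expect is the structural outerplanarity claim for $G_A$ and $G_B$. While the nested-disk intuition is compelling, faces of $G$ whose boundary cycles oscillate many times between two consecutive BFS layers produce delicate local configurations, and one must also control the multi-edges and loops introduced by the contraction, as well as the special role of the root layer $V_0 = \{r\}$. Should the outerplanarity statement turn out to be too strong as formulated, a natural fall-back is to coarsen the partition (for example, using layers mod~$3$) and apply Theorem~\ref{BV} once per class, aiming for a slightly larger constant that can afterwards be tightened back to $24$. A secondary technicality is that the $2$-connected reduction must preserve every facial path of $G$ without introducing new repetitions, which requires care in how the augmenting edges are placed.
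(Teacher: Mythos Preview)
Your overall plan --- split $V(G)$ into two classes by layer parity, add shortcut edges across maximal runs of the opposite class along each face boundary, colour each resulting graph nonrepetitively with $12$ colours from disjoint palettes via Theorem~\ref{BV}, and then observe that any facial repetition projects to a repetition on one of the two subsequences --- is exactly the strategy the paper uses. The gap is in the choice of layering.

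With BFS layers from a root, the outerplanarity of $G_A$ simply fails. Take the cube $Q_3$ (already $3$-connected, so your reduction step is moot) rooted at $000$: then $V_0=\{000\}$, $V_1=\{100,010,001\}$, $V_2=\{110,101,011\}$, $V_3=\{111\}$, and $A=\{000,110,101,011\}$. Each of the six square faces alternates $A,B,A,B$, so shortcutting its two single-vertex $B$-runs yields one $A$-edge; the six faces produce all $\binom{4}{2}$ pairs, and $G_A\cong K_4$, which is not outerplanar. The underlying reason is that BFS layers do not bound nested topological disks, and a single face of $G$ can meet arbitrarily many BFS layers. For the same reason your mod-$3$ fallback does not help: a long face can still visit several layers in each residue class, so the per-class shortcut graph need not be outerplanar.

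The fix --- and this is precisely what the paper does --- is to replace BFS layers by \emph{peeling} layers: set $G_1=G$, let layer $i$ be $\partial(G_i)$, the set of vertices on the outer face of $G_i$, and put $G_{i+1}=G_i\setminus\partial(G_i)$. Each $[\partial(G_i)]$ is trivially outerplanar, and the key structural fact (Claim~\ref{bfrf}) is that every face of $G$ meets at most two \emph{consecutive} peeling layers. Hence all shortcut edges coming from a given face land inside a single $[\partial(G_i)]^+$, which remains outerplanar; one colours each layer separately, reusing the palette $\{1,\dots,12\}$ for all odd $i$ and $\{13,\dots,24\}$ for all even $i$, and your final projection argument then goes through unchanged. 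No $2$-connectivity reduction is needed.
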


In the next section, we prove this conjecture, with $K=24$.
Our idea is to use Theorem~\ref{BV} recursively for layers of the targeted plane graph.
The subtlety is to introduce some extra edges, when complications may occur.

Since our ideas work with local colorings, we can naturally extend the result to
graphs embedded in surfaces.

We also prove that the $n\times n$ grid requires at most 4 colors for a 
facial nonrepetitive coloring.

\section{Results}

Let $H$ be a plane graph. 
Let $\partial (H)$ denote the set of vertices, which are incident with the outer face of $H$. 
Let $[\partial (H)]$ denote the subgraph of $H$ induced by $\partial (H)$.

\bigskip
The main result of this paper is the following

\begin{theorem} \label{24}
If $G$ is a plane graph, then $\pi_f(G)\leq 24$. 
\end{theorem}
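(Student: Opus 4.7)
The plan is to decompose $G$ into concentric layers by iteratively peeling off the outer face, build an auxiliary outerplanar graph on each layer, color each layer nonrepetitively with $12$ colors via Theorem~\ref{BV}, and alternate between two disjoint $12$-color palettes on consecutive layers, yielding $24$ colors overall. A preliminary reduction lets me assume that $G$ is $2$-connected: any edges I add to $G$ create no new facial paths and identify more paths as facial, so a facial nonrepetitive coloring of a spanning supergraph works for $G$ as well.

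Set $L_0:=\partial(G)$ and, inductively, $L_{i+1}:=\partial(G-L_0\cup L_1\cup\cdots\cup L_i)$, so the $L_i$ partition $V(G)$. For every layer index $i$, I construct an auxiliary plane graph $H_i$ with vertex set $L_i$ by taking all edges of $G$ joining two vertices in $L_i$, and adding a ``shortcut'' edge $uv$ whenever $u,v\in L_i$ and some face $f$ of $G$ contains a subpath of its boundary from $u$ to $v$ whose interior lies in $L_{i+1}$. The graph $H_i$ embeds in the plane with every vertex on the outer face, hence is outerplanar, so Theorem~\ref{BV} yields a facial nonrepetitive coloring $c_i$ of $H_i$ using $12$ colors. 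Let $A$ and $B$ be two disjoint palettes of $12$ colors each; assign the colors of $c_i$ from $A$ if $i$ is even, and from $B$ if $i$ is odd. This uses $24$ colors on $G$.

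To verify the coloring works, take any facial path $P=v_1,\dots,v_{2t}$ whose color sequence is a repetition, so $c(v_j)=c(v_{t+j})$ for every $j\le t$. A key structural claim is that the boundary of any face of $G$ meets at most two consecutive layers $L_i,L_{i+1}$; accepting this, the disjointness of $A$ and $B$ forces $v_j$ and $v_{t+j}$ to lie in the same layer for each $j$, so the ``layer pattern'' along $P$ is itself a repetition. Consequently, the subsequence $Q_i$ of $P$ consisting of the vertices in $L_i$ has a color sequence that is again a repetition. By the shortcut-edge construction of $H_i$, two successive vertices of $Q_i$ are either adjacent in $G$ (hence in $H_i$) or are connected by a shortcut edge of $H_i$, and the whole of $Q_i$ lies on a single face of $H_i$; thus $Q_i$ is a facial path of $H_i$ with a repeating color pattern, contradicting the choice of $c_i$.

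The main obstacle is the structural claim that each face of $G$ touches only two consecutive layers, together with the assertion that $H_i$ is outerplanar and that the restriction of a facial path of $G$ to $L_i$ is indeed facial in $H_i$. This is where the ``extra edges'' alluded to in the introduction enter: if a face boundary re-enters $L_i$ from $L_{i+1}$ in a way that threatens outerplanarity or creates a face of $G$ skipping a layer, I add further edges (within $L_i$ or between $L_i$ and $L_{i+1}$) that retain planarity of $H_i$, keep all its vertices on the outer face, and prevent boundaries from dropping beyond two layers. Once these delicate adjustments are made, the palette-separation argument in the previous paragraph closes the proof.
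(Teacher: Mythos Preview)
Your plan is exactly the paper's: peel off boundary layers $L_i=\partial(G_i)$, augment each layer with ``shortcut'' edges that jump over inner-layer excursions, color the resulting outerplanar graph with $12$ colors via Theorem~\ref{BV}, and alternate two disjoint $12$-palettes on even and odd layers. The repetition argument (a repetitive facial path forces a repetitive path in some augmented layer) is also the same. Two points, however, are off.

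First, the $2$-connectedness reduction is stated backwards. Adding an edge inside a face $f$ of a plane graph splits $f$, and a facial path of $G$ that passes through an endpoint of the new chord is in general \emph{not} facial in the supergraph. Concretely, in a bowtie (two triangles sharing a cut vertex $v$) the path $b,v,d,c$ along the outer boundary is facial; after adding the chord $bd$ it no longer lies on any single face. Thus a facial nonrepetitive coloring of a spanning plane supergraph need not be one for $G$. The paper makes no such reduction, and none is needed.

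Second, you misattribute the role of the ``extra edges.'' The structural fact that every face of $G$ meets at most two consecutive layers holds automatically from the layer definition (this is the paper's Claim~\ref{bfrf}): if $i$ is the least index with $L_i\cap\partial f\neq\emptyset$, then $f$ is still a face of $G-(L_0\cup\cdots\cup L_{i-1})$, it touches the outer boundary there, and removing $L_i$ merges $f$ into the outer face, forcing every remaining boundary vertex of $f$ into $L_{i+1}$. No added edges are involved. The shortcut edges you already placed in $H_i$ serve a different and sufficient purpose: they make the $L_i$-restriction $Q_i$ of a facial path an actual \emph{path} in $H_i$, so that the nonrepetitive coloring from Theorem~\ref{BV} applies to it. You do not need $Q_i$ to be facial in $H_i$, only to be a path there, since Theorem~\ref{BV} bounds $\pi$, not $\pi_f$; and no ``further edges'' beyond those shortcuts are required.
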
 

\begin{proof}
First, we color the vertices of $G$ with blue and red as follows:
\begin{itemize}
\item Set $G_1=G$, and color the vertices of $\partial (G_1)$ with blue. 
\item Set $G_{i+1}=G_i \setminus \partial (G_i)$, and color the vertices of $\partial (G_{i+1})$ with red, 
if the vertices of $\partial (G_i)$ were blue, and color with blue, if the vertices of $\partial (G_i)$ were red.
\end{itemize} 

\begin{claim}
The graph $[\partial (G_i)]$ is outerplanar for every $i$. 
\end{claim} 

Let $B$ denote the set of blue vertices of $G$, and $R$ the red ones. 
A path $v_1 \dots v_j v_{j+1} \dots v_k v_{k+1} \dots v_m$ is $BRB$, if $v_1, \dots, v_j \in B$,  $v_{j+1}, \dots ,v_k \in R$, and $v_{k+1}, \dots ,v_m \in B$.
We similarly define the $RBR$ paths.

Let $B(f)$ and $R(f)$ denote the set of blue and red vertices incident with face $f$.

\begin{claim}\label{bfrf}
If a face $f$ of $G$ is incident with both a blue and a red vertex, 
then there is a subscript $i$ such that either $B(f)\subseteq \partial (G_i)$ and $R(f)\subseteq \partial (G_{i+1})$ 
or $R(f)\subseteq \partial (G_i)$ and $B(f)\subseteq \partial (G_{i+1})$.
\end{claim}

Let $f$ be a face of $G$.
\begin{itemize}
\item if $B(f)\subseteq \partial (G_i)$ and $R(f)\subseteq \partial (G_{i+1})$, 
then for any $BRB$ path\\ $v_1 \dots v_j v_{j+1} \dots v_k v_{k+1} \dots v_m$, 
where $v_1, \dots ,v_j \in B$,  $v_{j+1}, \dots ,v_k \in R$,  $v_{k+1}, \dots ,v_m \in B$, we insert an edge $v_jv_{k+1}$ inside the face $f$.
\item if $R(f)\subseteq \partial (G_i)$ and $B(f)\subseteq \partial (G_{i+1})$, 
then for any $RBR$ path\\ $v_1 \dots v_j v_{j+1} \dots v_k v_{k+1} \dots v_m$, 
where $v_1, \dots ,v_j \in R$,  $v_{j+1}, \dots ,v_k \in B$,  $v_{k+1}, \dots ,v_m \in R$, we insert an edge $v_jv_{k+1}$ inside the face $f$.
\end{itemize}

In this way, we obtain a new graph $[\partial (G_i)]^+$ from the graph $[\partial (G_i)]$. 
Observe, that $[\partial (G_i)]^+$ is also outerplanar.
Therefore, it has a nonrepetitive vertex coloring with at most $12$ colors, see Theorem \ref{BV}.

Let us color the vertices of $[\partial (G_i)]^+$ nonrepetitively with colors $1,\dots, 12$ for $i$ odd, 
and color the vertices of $[\partial (G_i)]^+$ nonrepetitively with colors $13,\dots, 24$ for $i$ even.
Now every (facial) path in $[\partial (G_i)]^+$ is nonrepetitive. 

These colorings induce a coloring of $G$ with $24$ colors. 
It remains to show that there is no repetitive boundary path in $G$.

Assume that there is a face $f$ with a repetitive boundary path. 
We may assume $B(f)\subseteq \partial (G_i)$ and $R(f)\subseteq \partial (G_{i+1})$, see Claim \ref{bfrf}.

The repetitive boundary path can not be monochromatic, since otherwise the same path is repetitive in $[\partial (G_i)]^+$ or in $[\partial (G_{i+1})]^+$. 
Therefore, we may assume that the path is $BR \dots BR$. 
In this case, the complete $B \dots B$ part is a repetitive path in $[\partial (G_i)]^+$, a contradiction.
\end{proof} 

As a lower bound for the class of planar graphs, we observe the following 

\begin{lemma}
 There exists a planar graph $G$ with $\pi_f(G)=5$.
\end{lemma}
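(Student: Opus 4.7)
The plan is to exhibit a concrete small plane graph $G$ and establish $\pi_f(G)\le 5$ and $\pi_f(G)\ge 5$ separately by direct arguments.

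For the upper bound I would display an explicit $5$-coloring of $V(G)$ and verify nonrepetitiveness face by face. Since a facial path is nonrepetitive iff it contains no $aa$ (length $2$) and no $abab$ (length $4$) sub-pattern, only finitely many short sub-patterns on each face need to be inspected, so the check is routine once $G$ is fixed.

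For the lower bound I would arrange $G$ to contain a $K_4$ subgraph whose four triangular regions are all faces of $G$. On any such triangular face, every pair of its vertices lies on a facial edge, so the nonrepetitive condition collapses to the proper-coloring condition on the $K_4$; any facial nonrepetitive $4$-coloring therefore assigns the four $K_4$-vertices all four colors, determining the restricted coloring up to permutation. I would then attach a short further structure to the $K_4$ that creates an additional face of length at least $4$ whose boundary revisits several $K_4$-vertices in a prescribed cyclic order. The nonrepetitive condition on this longer face yields an $abab$-avoidance constraint that, together with the already forced colors on $K_4$, cannot be satisfied by any extension to a $4$-coloring of $G$.

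The main obstacle will be designing the added face so that the forbidden $abab$ pattern is truly unavoidable under every completion of the $K_4$-coloring. This requires, for each essentially unique coloring of the $K_4$, enumerating the residual color options forced on the attached vertices by all incident faces and showing that some face always receives a forbidden repetition. Keeping $G$ small enough that the case analysis is short, while still forcing the fifth color, is the delicate part of the construction.
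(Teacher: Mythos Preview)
Your plan is workable in principle but takes a much more laborious route than the paper. The paper simply exhibits the $5$-wheel $W_5$: a $5$-cycle $C_5$ together with a central vertex adjacent to all five outer vertices. The outer face of $W_5$ is bounded by $C_5$, so any facial nonrepetitive coloring restricts to a nonrepetitive coloring of $C_5$; since $\pi(C_5)=4$, the five outer vertices already use four distinct colors. The central vertex shares a facial edge with each outer vertex (via the triangular inner faces), so it must receive a fifth color. This gives $\pi_f(W_5)\ge 5$, and $\pi_f(W_5)\le 5$ is immediate by coloring $C_5$ nonrepetitively with four colors and the center with a fifth.

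Your $K_4$-based scheme differs in two ways. First, the four colors are forced by proper-coloring constraints on $K_4$ rather than by a long cycle; that part is sound and requires no case analysis. Second---and this is where your proposal is still only a sketch---you have to design and verify an attached gadget that manufactures an unavoidable $abab$ pattern on some longer face. You correctly flag this as the delicate step, but you have not actually produced such a gadget, and short attachments (a single extra vertex placed in one triangular face, say) do \emph{not} force a fifth color: the new vertex typically retains two legal colors among the original four, and no length-$4$ repetition is created. Making the gadget work will require a larger face and a genuine case enumeration, whereas the $5$-wheel sidesteps all of this by invoking the known value $\pi(C_5)=4$.
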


\begin{proof}
 For instance, the $5$-wheel is such a graph. Indeed, a $5$-cycle needs $4$ colors. 
If we introduce a new vertex $v$ in the middle adjacent to all five vertices, then it needs a fifth color.
\end{proof}

In the proof of Theorem~\ref{24}, we use colorings locally. 
This suggests, that our proof can be extended to graphs on surfaces.

\begin{theorem}
 If $G$ is a graph embedded on the torus, then $\pi_f(G)\leq 24^2$. 
\end{theorem}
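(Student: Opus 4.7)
The plan is to adapt the strategy of Theorem \ref{24} by reducing the toroidal setting to the planar one along two transverse cuts of the torus, then combining the two resulting $24$-colorings via a product; the squared bound reflects this product construction.

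Concretely, let $G$ be embedded on the torus $T$. I would first pick two non-contractible simple closed curves $\gamma_1, \gamma_2$ on $T$, transverse to one another and together cutting $T$ open into a disk. These curves can be chosen to avoid the vertices of $G$ and to cross edges transversely. For each $i \in \{1,2\}$, cutting $T$ along $\gamma_i$ (and subdividing each crossed edge at the intersection point) produces a graph $G_i$ embedded in a cylinder, which is topologically planar. By Theorem \ref{24}, $G_i$ admits a facial nonrepetitive coloring $c_i : V(G_i) \to \{1, \ldots, 24\}$. Restricting each $c_i$ to $V(G)$ (ignoring subdivision vertices) and taking the product, I set $c(v) := (c_1(v), c_2(v))$, a coloring using at most $24^2$ colors.

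To verify that $c$ is facial nonrepetitive on $G$, consider any facial path $P$ on the boundary of a face $f$ of $G$. The face $f$ is a disk (cellular embedding) and $\gamma_1 \cup \gamma_2$ together cut $T$ into a single disk. I would argue that $\gamma_1, \gamma_2$ can be arranged so that at least one of them, say $\gamma_j$, is disjoint from $\partial f$; indeed, a non-contractible curve cannot lie inside the disk $f$, so such a curve either avoids $\partial f$ entirely or crosses it an even number of times. In the former case the cut along $\gamma_j$ leaves $f$ intact, so $f$ is a face of $G_j$ and $P$ is a facial path of $G_j$; hence $c_j$ is nonrepetitive on $P$, and a fortiori so is $c$.

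The hard part will be the final topological step: showing that for every face $f$ of $G$, one of $\gamma_1, \gamma_2$ can simultaneously be arranged to avoid $\partial f$. This is not automatic and likely requires choosing $\gamma_1, \gamma_2$ combinatorially, for instance as short cycles in the dual of $G$ representing independent homology classes, chosen to respect the face structure. Should such a clean separation prove unattainable for some face, the product argument would need refinement, exploiting the joint information of $c_1$ and $c_2$ to rule out repetitions on faces whose boundary is crossed by both curves, using that a $BRB$-style split analogous to Claim \ref{bfrf} can be tracked in both cylinders simultaneously.
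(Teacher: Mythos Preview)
Your approach differs from the paper's and leaves a real gap. The paper cuts only once, and crucially along a noncontractible cycle $C$ \emph{of the graph $G$ itself}. Because the cut follows edges of $G$, every face of $G$ survives intact as a face of the resulting plane graph $G^*$; the only new faces are the two bounded by the duplicated copies of $C$. Applying Theorem~\ref{24} to $G^*$ yields a $24$-colouring $c^*$. Vertices off $C$ keep their $c^*$-colour; a vertex $x_i\in C$ whose two copies received colours $s$ and $t$ is assigned the ordered pair $(s,t)$. The bound $24^2$ thus comes from these pairs on the cut, not from a product of two separate global colourings. Any facial path of $G$ lies on a face that is literally a face of $G^*$, so nonrepetitiveness of $c^*$ transfers directly to $c$.

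The step you yourself flag as ``the hard part'' is a genuine gap, and in general it cannot be filled. Take any cellular toroidal embedding with a single face: for a simple-graph instance, let two triangles share one vertex and embed them as a meridian and a longitude ($5$ vertices, $6$ edges, and hence $1$ face by Euler's formula). Every noncontractible curve on the torus, in every homology class, must enter that unique face, so no choice of $\gamma_1,\gamma_2$ meets your requirement. Your fallback does not rescue the argument either: once $\gamma_j$ crosses $\partial f$, the region $f$ is split into several faces of $G_j$, so a facial path of $G$ along $\partial f$ is not a facial path of $G_j$ at all, and $c_j$ gives you no control over it. The paper sidesteps all of this by cutting along graph edges, so that no face is ever disturbed.
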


\begin{proof}
If $C=x_1,\dots x_k,x_1$ is a noncontractible cycle of $G$, then let us cut the torus
along $C$. In this way, we get a plane graph $G^*$ with vertex set $V(G)\setminus V(C)$ plus two copies of $V(C)$. 
The faces of $G^*$ are the same as of $G$ with two additional faces formed by the two copies of $V(C)$.
We use Theorem~\ref{24} for $G^*$ and get a coloring $c^*$ using 24 colors.
We need a coloring $c$ of $G$ based on $c^*$.
The only trouble is to color $V(C)$, since $c^*$ possibly associated different colors to the two copies. 
Otherwise $c$ and $c^*$ are the same.
For a vertex $x_i$, if the two different colors associated by $c^*$ were $s$ and $t$, then $c(v):=(s,t)$.
There are $24^2-24$ such ordered pairs. 
Since $c^*$ was facial nonrepetitive, so does $c$.
The total number of colors we used is $24^2$.
\end{proof}

Using the same idea, we can prove the following

\begin{corollary}
 If $G$ is a graph, which is embedded in a surface of genus $g$, then
$\pi_f(G)\le 24^{2^g}$.
\end{corollary}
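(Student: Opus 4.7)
My plan is to induct on the genus $g$, using the construction from the previous theorem (the torus case) as the model for each inductive step. The base case $g=0$ is exactly Theorem \ref{24}, and the shape of the bound $24^{2^g}=(24^{2^{g-1}})^2$ already suggests that each unit increase in genus should square the color count via a single cut-and-pair operation.

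For the inductive step, suppose the result holds for genus $g-1$ and let $G$ be embedded in a closed surface $\Sigma$ of genus $g\ge 1$. First I would pick a cycle $C=x_1\cdots x_k$ of $G$ that is non-separating in $\Sigma$; such a cycle exists in the $1$-skeleton whenever the embedding is cellular, and if it is not one may first add chords inside non-disk faces (this can only reduce the space of facial paths and hence not increase $\pi_f$). Cutting $\Sigma$ along $C$ and capping the two resulting boundary circles with disks produces a closed surface of genus $g-1$, and $G$ becomes a graph $G^*$ with vertex set $(V(G)\setminus V(C))\cup V(C)'\cup V(C)''$, where $V(C)'$ and $V(C)''$ are two disjoint copies of $V(C)$, together with two new faces bounded by these copies. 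By the inductive hypothesis, $G^*$ admits a facial nonrepetitive coloring $c^*$ with at most $24^{2^{g-1}}$ colors. I would then define $c$ on $G$ exactly as in the torus proof: $c(v)=c^*(v)$ for $v\notin V(C)$, and $c(x_i)=(c^*(x_i'),c^*(x_i''))$ for cycle vertices, treating single colors as diagonal pairs so that the total palette has size at most $(24^{2^{g-1}})^2=24^{2^g}$.

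The verification that $c$ is facial nonrepetitive is the same as in the torus argument: every face $f$ of $G$ is a disk lying on one specific side of $C$, so $f$ is also a face of $G^*$ and each facial path of $G$ on $f$ lifts to a facial path of $G^*$ using the corresponding copy of each cycle vertex on $f$. If a repetition occurred under $c$, then reading off the appropriate coordinate from each pair would produce a repetition along the lifted facial path in $G^*$ under $c^*$, contradicting its facial nonrepetitiveness. The main obstacle is the topological bookkeeping, essentially the same subtlety as in the torus theorem but now iterated: one must be sure that a non-separating cycle in the $1$-skeleton always exists (handled by passing to a cellular embedding), that cutting along it truly produces a closed surface of genus $g-1$ after capping, and that every face of $G$ lies on a single side of $C$ so that the lift of facial paths is well defined. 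Once this is in place, the recursion $f(g)=f(g-1)^2$ with $f(0)=24$ directly gives $\pi_f(G)\le 24^{2^g}$.
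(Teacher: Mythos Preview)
Your proposal is correct and is precisely the argument the paper has in mind: the paper does not spell out a proof of this corollary at all, merely saying ``using the same idea'' after the torus theorem, and your induction on $g$ with a cut along a non-separating cycle and the pair-coloring on $V(C)$ is exactly that idea, yielding the recursion $f(g)=f(g-1)^2$ with $f(0)=24$. Your write-up is in fact more careful than the paper's sketch, since you explicitly flag the need for a non-separating (not merely noncontractible) cycle and the cellularity issue.
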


We recall a definition from \cite{MT}.
Let $C_1,\dots,C_k$ be a collection of pairwise disjoint cycles in an embedded graph $G$.
These cycles form a {\it planarizing collection of cycles} if the cutting along all of 
$C_1,\dots,C_k$ results in a connected graph embedded in the sphere (plane).
If there exists a planarizing collection of cycles, then we get the same bound as for the torus.

\begin{corollary}
 If $G$ is a graph, which is embedded in a surface of genus $g$, and there exists a 
planarizing collection of cycles, then $\pi_f(G)\leq 24^2$. 
\end{corollary}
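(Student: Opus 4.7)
The plan is to imitate the cut-and-paste argument used for the torus, but now apply it simultaneously to all cycles of a planarizing collection $C_1,\dots,C_k$. I would cut $G$ along every $C_i$ at once; by the defining property of a planarizing collection, the result is a connected plane graph $G^*$. Because the $C_i$ are pairwise disjoint, each vertex of $V(C_1)\cup\dots\cup V(C_k)$ lies on exactly one of the cycles and thus gives rise to exactly two copies $v'$ and $v''$ in $G^*$, while every other vertex of $G$ appears once. The faces of $G^*$ are the faces of $G$ together with additional faces created by the duplicated cycles.

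Next I would apply Theorem~\ref{24} to the plane graph $G^*$, obtaining a facial nonrepetitive coloring $c^*$ of $G^*$ with at most $24$ colors. I would then define a coloring $c$ of $G$ by $c(v)=c^*(v)$ for a vertex $v$ not on any cycle, and $c(v)=(c^*(v'),c^*(v''))$ for a cycle vertex $v$, with the two copies ordered by a fixed convention. Counting $24$ diagonal single colors together with $24^2-24$ ordered pairs of distinct colors, the coloring $c$ uses at most $24^2$ colors in total.

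It remains to verify that $c$ is facial nonrepetitive. Every facial path in $G$ on a face $f$ lifts canonically to a facial path in $G^*$ on the corresponding face $f^*$ by replacing each cycle vertex with the unique copy that borders $f^*$. A hypothetical $c$-repetition on such a facial path of $G$ forces coordinate-wise equality of the associated ordered pairs, which in turn forces the $c^*$-colors of the lifted copies to agree at each matched position. This produces a $c^*$-repetition on a facial path of $G^*$, contradicting Theorem~\ref{24} applied to $G^*$ and completing the argument.

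The main obstacle sits in this last verification step. When two vertices of a hypothetical repetition lie on different cycles of the collection, the two copies chosen by the common face may correspond to opposite positions of the ordered pair, so coordinate equality of the pairs need not immediately imply $c^*$-equality of the chosen copies. The delicate point where I would spend the most care is fixing the labelling convention for the pairs of copies so that this transport of repetitions actually goes through, paralleling the single-cycle bookkeeping of the torus argument and keeping the color count at $24^2$ rather than growing with $k$.
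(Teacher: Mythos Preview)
Your proposal is exactly the argument the paper intends: the paper gives no separate proof of this corollary but simply points back to the torus argument after recalling what a planarizing collection is, and your simultaneous cut along all $C_i$ followed by the ordered-pair recoloring is the straightforward extension of that proof to $k$ disjoint cycles. The labelling subtlety you isolate in your final paragraph is genuine, but note that the paper's torus proof glosses over the same point --- it merely asserts ``since $c^*$ was facial nonrepetitive, so does $c$'' without verifying the transport of repetitions --- so your write-up is already at least as careful as the source, and you have been more explicit about where the remaining work lies.
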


A string, that is a finite sequence of symbols, is {\it palindromic}, if it reads the same backwards.
In the proof of our next theorem, we use a palindromic nonrepetitive sequence.
We need some definitions and recall a Theorem by Thue.
Upper bar denotes the complement of a string modulo $3$, that is 
$\overline {a_1a_2\dots a_n}=(2-a_1)(2-a_2)\dots(2-a_n)$,
where $a_i \in\{0,1,2\}$.

Let $\beta_i$ be a string defined recursively as follows: $\beta_0=2$ and 

$\gathered
\beta_i=\left\{
\aligned
\beta_{i-1} 2 \overline {\beta_{i-1}}&\quad\text{for an even $i$},\\
\beta_{i-1} 1 \overline {\beta_{i-1}}&\quad\text{for an odd $i$}.
\endaligned
\right.
\endgathered$

Thue \cite{T} proved the following

\begin{theorem}\label{beta}
As defined above, $\beta_{2k}$ is a palindromic nonrepetitive sequence for any $k \geq 0$.
\end{theorem}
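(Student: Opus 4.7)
The plan is to establish both properties of $\beta_{2k}$ by induction on $i$, proving in each case a slightly stronger statement about all $\beta_i$.

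For the palindromic property, I would show by induction on $i$ that $\beta_i^R = \beta_i$ when $i$ is even and $\beta_i^R = \overline{\beta_i}$ when $i$ is odd, where $w^R$ denotes the reversal of the string $w$. The base case $\beta_0=2$ is immediate. The inductive step uses $(xy)^R = y^Rx^R$, the commutation $\overline{w^R}=\overline{w}^R$, and the identities $\overline{1}=1$, $\overline{2}=0$. For example, when $i$ is even, $i-1$ is odd, so by the inductive hypothesis $\beta_{i-1}^R = \overline{\beta_{i-1}}$, hence
\[
\beta_i^R = (\overline{\beta_{i-1}})^R\, 2\, \beta_{i-1}^R = \overline{\beta_{i-1}^R}\, 2\, \overline{\beta_{i-1}} = \beta_{i-1}\, 2\, \overline{\beta_{i-1}} = \beta_i.
\]
The odd case is analogous (noting $\overline{1}=1$ is exactly what makes the middle letter reverse to itself). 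Specialising to $i=2k$ gives the palindromic conclusion.

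For the nonrepetitive property, I would induct on $i$ to prove the stronger statement that every $\beta_i$ is nonrepetitive; the theorem follows by restricting to even indices. The base cases $\beta_0=2$ and $\beta_1=210$ are checked by inspection. For the inductive step, suppose for contradiction that $\beta_i = \beta_{i-1}\, c\, \overline{\beta_{i-1}}$ contains a repetition $uu$ of minimum length, where $c\in\{1,2\}$. If both copies of $u$ lie entirely in the left block $\beta_{i-1}$, or both entirely in the right block $\overline{\beta_{i-1}}$, then since complementation preserves repetitions, the inductive hypothesis is contradicted. Hence $uu$ must straddle the middle letter $c$, and the position of $c$ inside $uu$ splits the analysis into two sub-cases according to whether $c$ belongs to only one copy of $u$ or to both.

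The hard part is the sub-case where $uu$ straddles the middle and $|u|$ is comparable to $|\beta_{i-1}|$, so that a single copy of $u$ crosses the boundary between $\beta_{i-1}$ and $\overline{\beta_{i-1}}$. Here I would use the palindromic structure of $\beta_{i-1}$ (available from the first induction when $i-1$ is even) together with the specific value of $c$ to read off forced equalities between characters on the two sides of the middle, and then distil a shorter repetition inside $\beta_{i-1}$ or $\overline{\beta_{i-1}}$, contradicting the minimality of $|u|$. This delicate combinatorial case analysis is the main obstacle and is essentially the argument that Thue gave in~\cite{T}.
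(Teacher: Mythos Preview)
The paper does not prove this theorem at all: it is stated as a result of Thue and simply cited from~\cite{T}. So there is no ``paper's own proof'' to compare your proposal against.

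On its merits, your palindromic argument is correct and complete; the strengthened inductive hypothesis $\beta_i^R=\beta_i$ for even $i$ and $\beta_i^R=\overline{\beta_i}$ for odd $i$ is exactly what is needed, and the computations check out (in particular $\overline{1}=1$ is the crucial point in the odd step). Your nonrepetitive argument, however, is only a sketch: you set up the natural induction and correctly isolate the hard sub-case where a putative square $uu$ straddles the central letter, but you do not carry out the case analysis --- you yourself defer it to Thue. Note also that the tool you propose for that analysis, the ``palindromic structure of $\beta_{i-1}$ (available from the first induction when $i-1$ is even)'', only applies to even $i-1$; for odd $i-1$ you would need to exploit $\beta_{i-1}^R=\overline{\beta_{i-1}}$ instead, and the two parities of the inductive step interact differently with the central letter ($c=2$ when $i$ is even, $c=1$ when $i$ is odd). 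As written, then, your proposal establishes the palindromic half of the theorem but not the nonrepetitive half.
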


Observe that $\beta_{2k}=\beta_{2k-2} 1 \overline {\beta_{2k-2}} 2 \overline {\beta_{2k-2}} 1 \beta_{2k-2}$ for any $k \geq 1$.

There exists a facial nonrepetitive coloring of a planar triangulation with at most $4$ colors, since 
only the sequences of length two need to be checked for the nonrepetitive property.
Naturally, longer faces might create more trouble. 
For a typical planar graph with a lot of $4$-cycles, we prove that still $4$ colors are sufficient.

\begin{lemma}
 For the $n\times n$ grid $G_{n\times n}$, $\pi_f(G_{n\times n})\le 4$.
\end{lemma}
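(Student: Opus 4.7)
The plan is to give an explicit 4-coloring of $G_{n\times n}$. Observe first that every inner face is a 4-cycle, so the only facial paths of length $4$ inside the grid are the cyclic labels of a $2\times 2$ block; such a block is nonrepetitive precisely when it is not of the pattern $ABAB$, that is, when it uses at least three distinct colours. This inner-face condition is very mild and is satisfied, for example, by the parity colouring $c(i,j)=2(i\bmod 2)+(j\bmod 2)+1$, which places four distinct colours on every $2\times 2$ block.

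The genuine difficulty is the outer face, whose boundary is a cycle of length $4(n-1)$ on which every arc must be nonrepetitive. The parity colouring fails badly here, because each side of the boundary alternates only two colours and is already repetitive as soon as $n\ge 4$. My plan is to replace the parity colouring on the boundary by a colouring derived from the palindromic Thue sequence $\beta_{2k}$ of Theorem~\ref{beta}: choose $k$ large enough that $|\beta_{2k}|\ge 2(n-1)$, then use two factors of $\beta_{2k}$ (which, by the recursion $\beta_{2k}=\beta_{2k-2}\,1\,\overline{\beta_{2k-2}}\,2\,\overline{\beta_{2k-2}}\,1\,\beta_{2k-2}$, come naturally in complementary pairs) to label opposite sides of the boundary, aligning the four sides at the corners via the palindromic symmetry. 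The complementation together with a row-parity bit is what supplies the fourth colour.

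Once the boundary colouring is fixed, the interior is filled in using the same four colours by a locally adjusted parity colouring, either row by row or via a concentric-layer recursion in the spirit of the proof of Theorem~\ref{24}. This step should be routine: each inner $2\times 2$ face has four vertices and four available colours, so an $ABAB$ pattern can always be avoided by a trivial local choice, and no fifth colour is needed.

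The main obstacle I expect is the verification that \emph{every} arc of the outer boundary cycle is nonrepetitive, including arcs that cross one or more corners of the grid. For arcs lying entirely within a single side, nonrepetitivity reduces directly to Theorem~\ref{beta}. For arcs crossing a corner, the palindromic symmetry of $\beta_{2k}$ is essential: roughly, any hypothetical repetition $s_1\dots s_t\,s_1\dots s_t$ straddling a corner can be reflected through that corner to produce a repetition lying inside a Thue-nonrepetitive factor of $\beta_{2k}$, contradicting Theorem~\ref{beta}. Carrying out this reflection argument cleanly, while simultaneously ensuring that the boundary modifications do not spoil the inner-face condition and that no new colour is introduced, is the technical heart of the proof.
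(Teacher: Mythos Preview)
Your outline points at the same key tool as the paper---the palindromic nonrepetitive sequence $\beta_{2k}$---but two pieces are genuinely missing.

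\emph{The boundary colouring is not actually defined.} ``Two complementary factors of $\beta_{2k}$ on opposite sides, aligned at the corners, plus a row-parity bit'' does not describe a concrete $4$-colouring: complementation modulo $3$ keeps you inside $\{0,1,2\}$, while a literal parity bit would double the palette to six. The paper's move here is a specific trick you have not found: place the \emph{middle} of a sufficiently long $\beta_{2k}$ at the top-left corner of $C_1$ and run the sequence simultaneously clockwise and counterclockwise (palindromicity makes the two directions agree), so that all $4n-5$ boundary vertices except one get colours in $\{0,1,2\}$; the remaining vertex, the bottom-right corner, receives the fourth colour $3$. No reflection argument is needed for arcs through that corner, since colour $3$ occurs only once on the cycle.

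\emph{The interior filling is not routine.} Your row-by-row greedy argument is fine in the bulk (placing $(i,j)$ forbids at most three colours), but at the very last interior vertex $(n-1,n-1)$ all four incident $4$-faces close at once; if its four grid neighbours happen to share a colour, the four anti-$ABAB$ constraints can forbid all four colours. So ``a trivial local choice'' can fail. The paper avoids this by not filling from a fixed boundary at all: it iterates the boundary construction on concentric cycles $C_1,C_2,\ldots$, colouring odd-indexed cycles by (permuted copies of) the palindromic Thue sequence and pre-colouring every second vertex of each even-indexed cycle with $3$. Each remaining blank vertex on $C_{2j}$ then has two neighbours of colour $3$, hence at most two forbidden colours, and one of $\{0,1,2\}$ is always free. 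You do mention a ``concentric-layer recursion'' as an alternative, and that is exactly what is required---but the alternating Thue-layer / half-$3$-layer structure of the even cycles is the idea that makes it work, and it is absent from your plan.
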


\begin{proof}
Let us assume that $n$ is even, say $n=2n_1$. 
The vertices of $G_{n\times n}$ can be covered by $n_1$ concentric cycles $C_1, \dots ,C_{n_1}$, where
$C_1$ is the outer cycle of the embedding.
We refer to the position of the vertices according to the next figure.

\begin{center}
 \includegraphics[scale=0.4]{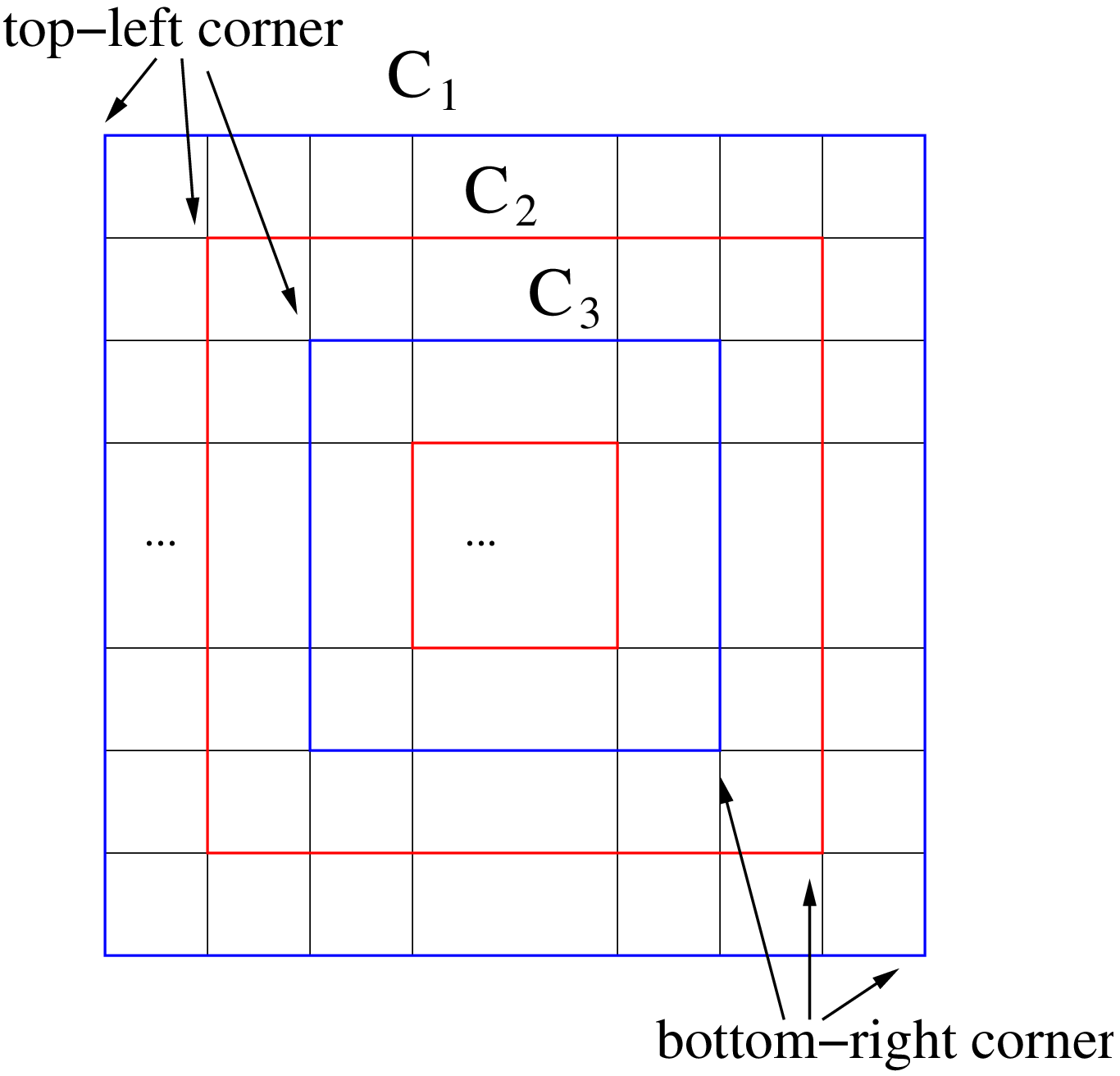}
\end{center}

We color the vertices according to these cycles, going inward.
First, we color the vertices of the outer cycle starting at the top-left corner and going clockwise.
We use the same coloring going counterclockwise.
The coloring is determined by a palindromic nonrepetitive sequence $s=\beta_{2k}$ of length at least $4n-5$  from Theorem~\ref{beta}, where the middle is the top-left corner.
We fix the sequence $s$ for the whole process, and use smaller and smaller finite parts of it
for the cycles $C_1,C_3,\dots$.
We use a forth color, $3$ say, for the bottom-right corner.

In the second round, we consider the next concentric cycle, $C_2$.
We color every second vertex by $3$ starting
with the top-left corner being blank.
The color of the top-left corner is determined by our fixed sequence, in our case it receives color $1$.
The bottom-right corner of $C_2$ depends on the colors of the facial neighbors in $C_1$.
However, there are two choices, we pick one.
We jump to the next level to color $C_3$.
We want to repeat the idea of coloring clockwise and counterclockwise with the palindromic nonrepetitive sequence.
We use the sequence for the bottom-right corner also, instead of using color $3$.
Now the colors of the corners of $C_2$ are forbidden for the corners of $C_3$.
Still there are two choices for each corner.
Therefore, we can select the colors to be different or to be the same, as we wish.
We do it according to the sequence we fixed in the beginning, using a part of it of appropriate length.
We permute the colors $\{0,1,2\}$, if we need.

In the next step, we color the blank vertices of $C_2$.
The main idea behind the construction and coloring the vertices in concentric
cycles comes now. 
Any blank vertex needs to get a color different from its neighbors, and this is the only constraint.
There is no danger of constructing a repetitive $4$-path.
Since every blank vertex is adjacent to two vertices with color $3$, there are at most two 
forbidden colors. Therefore, one of $\{0,1,2\}$ is available for each blank vertex.

From now on, we can repeat the above steps for each pair of consecutive cycles.
The only thing we have to check is the last or last two cycles.
If $n_1$ is odd, then we finish without trouble.
If $n_1$ is even, then we have to show that the last four vertices in the middle can be colored.
For the penultimate cycle, we use the sequence $2,0,1,2,0,2,1$.
Here again, take into account the freedom of choice for the corner vertices and the permutation of the colors.
Now the last four vertices can be colored according to the next figure, which finishes the proof
of the even case.

\begin{center}
 \includegraphics[scale=0.5]{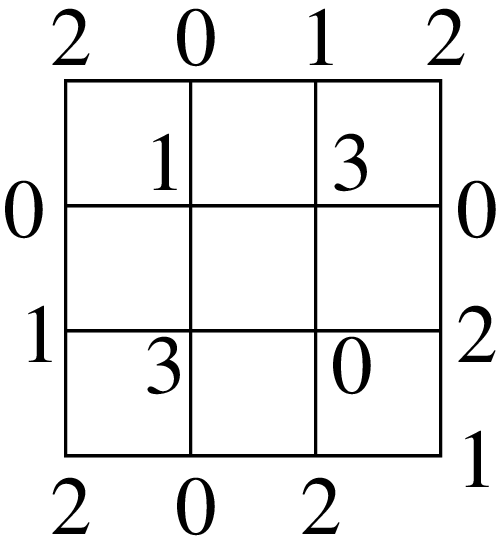}
\end{center}

The odd case is very similar. Suppose that $n=2n_1+1$.
We consider the concentric cycle cover $C_1,\dots C_{n_1},C_{n_1+1}$ again.
Here $C_{n_1+1}$ is a single vertex.
We color $C_1$ with the fixed nonrepetitive palindromic sequence.
The only exception is the bottom-right corner, which gets color $3$.
Now we partly color $C_2$, starting at the top-left corner, which is colored by $3$.
{}From that on, we color every second vertex by $3$ and leave the rest blank.
Again, the bottom-right corner is left blank, exceptionally.
Now we color the $2\times 2$ bottom-right corner of $C_2$ and $C_3$ according to the next figure.

\begin{center}
 \includegraphics[scale=0.5]{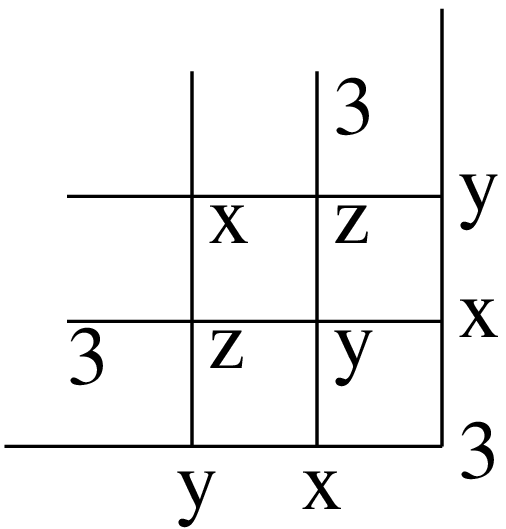}
\end{center}

We now jump to $C_3$. There is one precolored vertex.
Anyway, we can use the fixed nonrepetitive palindromic sequence, possibly with permuted colors.
Now we turn back to $C_2$ and finish the coloring as before.

These steps can be repeated, and we color the concentric cycles in pairs.
At the end, the remaining single vertex is either colored by $3$ or $2$ depending on parity. 
Both cases work without any trouble.
\end{proof}

\end{document}